\newtheorem{theorem}{Theorem}[section]
\newtheorem{definition}[theorem]{Definition}
\numberwithin{equation}{section}
\newtheorem{lemma}[theorem]{Lemma}
\newtheorem{proposition}[theorem]{Proposition}
\numberwithin{equation}{section}
\renewcommand{\phi}{\varphi}
\renewcommand{\epsilon}{\varepsilon}
\renewcommand{\emptyset}{\varnothing}
\newcommand{\tn}{|\kern-.1em|\kern-0.1em|}
\newcommand\be{\begin{equation}}
\newcommand\ee{\end{equation}}
\newcommand{\citet}{\cite}
\newcommand{\citep}{\cite}
\author{
{\sf Christian Borgs}\thanks{Microsoft Research New England e-mail: {\tt Christian.Borgs@microsoft.com}. }
\and
{\sf Jennifer Chayes}\thanks{Microsoft Research New England e-mail: {\tt jchayes@microsoft.com}.}
\and
{\sf Adam Smith}\thanks{Boston University; e-mail: {\tt ads22@bu.edu}. }
\and
{\sf Ilias Zadik}\thanks{MIT; e-mail: {\tt izadik@mit.edu}. Research done in part while an intern at Microsoft Research New England. }
}
\begin{document}

\title{Private Algorithms Can Always Be Extended}
\date{\today}

\maketitle

\begin{abstract}
We consider the following fundamental question on $\epsilon$-differential privacy. Consider an arbitrary $\epsilon$-differentially private algorithm defined on a subset of the input space. Is it possible to extend it to an $\epsilon'$-differentially private algorithm on the whole input space for some $\epsilon'$ comparable with $\epsilon$? In this note we answer affirmatively this question for $\epsilon'=2\epsilon$. Our result applies to every input metric space and space of possible outputs. This result originally appeared in a recent paper by the authors~\cite{BCSZ18}. We present a self-contained version in this note, in the hopes that it will be broadly useful. 
\end{abstract}

\section{Introduction}

We are undoubtedly living in a revolutionary era for data science. The number and magnitude of the available datasets have been growning enormously during the last years. Arguably, though, what makes the data so useful is also what makes them very sensitive. In order to get a theoretical handle on the extent to which an algorithm reveals too much about an individual's data, Dwork et al.~\cite{DMNS06} introduced \emph{differential privacy}, a mathematical property of statistical algorithms which guarantees privacy of individuals' input data.  Roughly speaking, differential privacy requires that a change to one individual's input data not affect the algorithm's output distribution too much.

Differential privacy appears in the literature in various forms. It is typically presented in its ``relaxed form" with respect to two parameters $\epsilon>0$ and $\delta \geq 0$,~\cite{DMNS06}. In this note, we focus solely on the special case $\delta=0$, sometimes called ``pure'' differential privacy, or just $\epsilon$-differential privacy. Differential privacy is a property of a randomised algorithm which takes values on some metric space $(\mathcal{M},d)$ and outputs distributions on some measurable space $(\Omega,\mathcal{F})$:

\begin{definition}\label{dfn}
A randomized algorithm $\mathcal{A}$ is $\epsilon$-differential private if for all subsets $S \in \mathcal{F}$ of the output measurable space $(\Omega, \mathcal{F})$ and data-sets $D_1,D_2$ of the input metric space $(\mathcal{M},d)$,  \begin{equation} \label{privdfn}\mathbb{P}\left(\mathcal{A}(D_1) \in S\right) \leq \exp\left[\epsilon d(D_1,D_2)\right]\mathbb{P}\left(\mathcal{A}(D_2) \in S\right).\end{equation}
\end{definition} The parameter $\epsilon>0$ should be treated as a privacy measure; the smaller the $\epsilon$ the higher the privacy guarantee.

An intriguing characteristic of analyzing the performance of an $\epsilon$-differential private algorithms is an inherent trade-off between accuracy and privacy. This is simply the property that increasing the required privacy level of the algorithm (decreasing $\epsilon>0$) constrains the accuracy levels of estimation. The natural quantitiative questions rises: how much accuracy is necessarily sacrificed if we entitle our algorithms to a minimum level of privacy? To make the question more precise, suppose that we face an arbitrary statistical estimation question and we restrict ourselves to the use of algorithms of a certain differential privacy level $\epsilon$. What is the optimal accuracy that can be achieved? How does it compare to the optimal accuracy achieved without any privacy guarantee?  Despite the simplicity of this question, in most examples the exact calculation of the underlying rates of estimation remain, to the best of our knowledge, largely open.

A key feature of any estimation question is the generating data distribution. Notably, this distribution does not affect at all the $\epsilon$-differential privacy constraint, as (\ref{privdfn}) should hold for any pair of input datasets. On the other hand, the assumed generating distribution can massively change the accuracy guarantee. For example, let us consider the following problem, studied in~\cite{KNRS13,ChenZ13,BBDS13}; the analyst receives a sampled graphs $G$ from a distribution over bounded degree undirected graphs on $n$ vertices. The goal is to estimate, in the mean squared error sense, the edge density of the input graph using an $\epsilon$-differentially private algorithm. As we mentioned above, the algorithm needs to satisfy (\ref{privdfn}) for all pairs of graphs, but it needs to be accurate only on graphs appearing with non-negligible probability as input. In particular, our algorithms suffices to accurately estimate the edge density solely for bounded degree graphs, while for the rest graphs it could potentially be extremely inaccurate. If one knows more about how the graph was generated, then one can further restrict the ``interesting'' set of inputs---for example, a graph generated from $G(n,p)$ will have cut density approximately $p$ for all cuts, with high probability (for $p$ not too close to zero). 

Building on this inherent feature of the problem the following strategy for designing differentially private algorithms has appeared in various forms. 
\begin{itemize} 
\item First design an algorithm $\mathcal{A}_1$ that is both accurate and $\epsilon$-differentially private on a set $\mathcal{H} \subset \mathcal{M}$ of ``typical" elements of the generating data distribution. 
\item Extend the algorithm $\mathcal{A}_1$ to a private algorithm $\mathcal{A}_2$ on the whole space of inputs $\mathcal{M}$ so that (a) it is $\epsilon'$-differentially private on the whole space for some $\epsilon'  \geq  \epsilon$ and (b) when the input belongs in $\mathcal{H}$, it outputs the same distribution as the original algorithm $\mathcal{A}_1$. \end{itemize} Various such extension results has appeared in the literature~\cite{KNRS13,BBDS13,RaskhodnikovaS16,DayLL16,CummingsD18} but they are usually tailored for the specific applications each paper discusses. Here, we give a simple proof that such an extension is \textbf{always possible} for $\epsilon'=2\epsilon$. 

Differential privacy, as defined in Definition \ref{dfn}, can be easily shown to be equivalent with the existence of an $\epsilon$-Lipschitz map from the space of input data $(\mathcal{M},d)$ to the space of probability measures of some sample space $(\Omega,\mathcal{F})$ endowed with the $\infty$-Renyi divergence metric, $D_{\infty}(\mu,\nu)=\log \sup_{S \in \mathcal{F}}| \frac{ \mu(S)}{\nu(S)}|.$ Indeed, the condition (\ref{privdfn}) applied to two datasets $D_1,D_2$ is equivalent with the condition $D_{\infty}( \mathcal{A}(D_1),\mathcal{A}(D_2) ) \leq \epsilon d(D_1,D_2).$

Viewed from this perspective, the extension of an $\epsilon$-differentially private mechanism reduces to a Lipschitz extension question, where a function is assumed to be Lipschitz on a subset of the domain, and the goal is to be extented to a Lipschitz function on the whole domain. The extendability of Lipschitz functions has been thoroughly studied in the field of functional analysis, see for example the Lecture Notes~\cite{NaorLN}. One standard result in this line of research, states that if the image space is an $\ell_{\infty}(\Gamma)$ space for some set $\Gamma$, such an extension is always possible with $\epsilon'=\epsilon$ (see Theorem 2.1 in~\cite{NaorLN}). Unfortunately the image of differentiable private algorithms does not have the exact structure of an $\ell_{\infty}$ space. Despite that, using similar ideas with one of the standard proofs with the $\ell_{\infty}$ result, we are able to establish the general extension result for $\epsilon'=2\epsilon$.

\section{The Extension Result}

We consider the following statistical model.

\subsection*{The Model}
Let $n \in \mathbb{N}$ and $\epsilon>0$. We assume that the analyst's objective is to estimate a certain quantity which belongs in some measurable space $(\Omega,\mathcal{F})$ from input data which takes values in a metric space $(\mathcal{M},d)$. The analyst is assumed to use for this task a randomized algorithm $\mathcal{A}$ which should be \begin{itemize}
\item[(1)] as highly \textbf{accurate} as possible for input data belonging in some \textit{hypothesis set} $\mathcal{H} \subseteq \mathcal{M}$;

\item[(2)] $\epsilon$-\textbf{differentially private} for arbitrary pairs of input data-sets from $(\mathcal{M},d)$.
\end{itemize}

\subsection*{Extending Private Algorithms}

We now state formally the result described in the note. Consider an arbitrary $\epsilon$-differentially private algorithm defined on input belonging in some set $\mathcal{H} \subset \mathcal{M}.$ We show that it \textbf{can be always extended} to a $2\epsilon$-differentially private algorithm defined for arbitrary input data from $\mathcal{M}$ with the property that if the input data belongs in $\mathcal{H}$, the distribution of output values is the same with the original algorithm. We state formally the result. 

\begin{proposition}[``Extending Private Algorithms at $\epsilon$-cost"]\label{extension}
Let $\hat{\mathcal{A}}$ be an $\epsilon$-differentially private algorithm designed for input from $\mathcal{H} \subseteq \mathcal{M}$. Then there exists a randomized algorithm $\mathcal{A}$ defined on the whole input space $\mathcal{M}$ which is $2\epsilon$-differentially private and satisfies that for every $D \in \mathcal{H}$, $\mathcal{A}(D) \overset{d}{=}  \hat{\mathcal{A}}(D)$.
\end{proposition}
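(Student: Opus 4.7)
The plan is to perform a McShane--Whitney style Lipschitz extension at the level of Radon--Nikodym densities with respect to a reference measure. Write $\mu_h := \hat{\mathcal{A}}(h)$ for $h \in \mathcal{H}$, assume $\mathcal{H} \neq \emptyset$ and fix a basepoint $h_0 \in \mathcal{H}$; set $\lambda := \mu_{h_0}$. A first observation is that the $\epsilon$-DP hypothesis forces $\mu_h \ll \lambda$ for every $h \in \mathcal{H}$ with $d(h, h_0) < \infty$, and the density $f_h := d\mu_h/d\lambda$ then automatically satisfies
\[
e^{-\epsilon d(h, h_0)} \leq f_h \leq e^{\epsilon d(h, h_0)} \quad \lambda\text{-a.e.}
\]
Inputs $D \in \mathcal{M}$ that lie at infinite $d$-distance from $\mathcal{H}$ may be assigned arbitrary output distributions, since they sit in other $d$-connected components of $\mathcal{M}$ and the DP constraint across components is vacuous.

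The key construction is the lower McShane envelope of the rescaled densities:
\[
\psi_D(\omega) := \operatorname{ess\,sup}_{h \in \mathcal{H}} e^{-\epsilon d(D, h)} f_h(\omega), \qquad D \in \mathcal{M}.
\]
Two routine facts will drive the argument: (i) by $\epsilon$-DP of $\hat{\mathcal{A}}$, if $D \in \mathcal{H}$ then $e^{-\epsilon d(D,h)} f_h \leq f_D$ a.e.\ for every $h \in \mathcal{H}$, with equality at $h = D$, so $\psi_D = f_D$ a.e.; (ii) by the triangle inequality applied inside the exponent, $\psi_{D_1} \leq e^{\epsilon d(D_1, D_2)} \psi_{D_2}$ a.e.\ for all $D_1, D_2 \in \mathcal{M}$. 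I then let $Z_D := \int_\Omega \psi_D\, d\lambda$; this is finite since $\psi_D \leq e^{\epsilon d(D, h_0)}$ (triangle inequality combined with the density bound), and strictly positive since $\psi_D \geq e^{-\epsilon d(D, h^*)} f_{h^*}$ for any $h^* \in \mathcal{H}$. Declare $\mathcal{A}(D)$ to be the probability measure with density $\psi_D / Z_D$ with respect to $\lambda$.

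The DP verification is then immediate. For $D_1, D_2 \in \mathcal{M}$ the Radon--Nikodym derivative $d\mathcal{A}(D_1)/d\mathcal{A}(D_2)$ equals $(\psi_{D_1}/\psi_{D_2})\,(Z_2/Z_1)$. Property (ii) bounds the first factor by $e^{\epsilon d(D_1, D_2)}$ pointwise a.e.; integrating (ii) over $\lambda$ yields $Z_1 \geq e^{-\epsilon d(D_1, D_2)} Z_2$, so the second factor is also at most $e^{\epsilon d(D_1, D_2)}$. Multiplying gives the ratio bound $e^{2\epsilon d(D_1, D_2)}$ and hence $2\epsilon$-DP. Agreement on $\mathcal{H}$ follows immediately from (i) combined with $Z_D = \int f_D\, d\lambda = 1$.

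The main technical obstacle is not the estimates themselves, which are short, but the measure-theoretic scaffolding: producing a common dominating measure (handled by restricting to each $d$-connected component and choosing $h_0$ inside it) and ensuring that the essential supremum $\psi_D$ is a well-defined measurable function when $\mathcal{H}$ is uncountable (a standard but nontrivial measure-theoretic fact that must be invoked). One may also check independence of the construction from the choice of $h_0$. Finally, the factor of $2$ is a transparent symptom of the proof strategy: the single Lipschitz estimate on $\psi_D$ is used once pointwise in the numerator and once again through the normalizing constant $Z_D$ in the denominator.
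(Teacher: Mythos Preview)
Your proposal is correct and follows essentially the same McShane--Whitney strategy as the paper: fix a reference measure $\lambda=\hat{\mathcal A}(h_0)$, extend at the level of Radon--Nikodym densities via a Lipschitz envelope, normalize, and pick up one factor of $e^{\epsilon d}$ from the envelope and a second from the normalizing constant. The only cosmetic difference is that the paper uses the \emph{upper} envelope $\inf_{D'\in\mathcal H}e^{\epsilon d(D,D')}f_{D'}$ whereas you use the dual \emph{lower} envelope $\operatorname{ess\,sup}_{h\in\mathcal H}e^{-\epsilon d(D,h)}f_h$; the two arguments are line-by-line symmetric, and your added remarks on connected components and on measurability of the essential supremum for uncountable $\mathcal H$ are points of rigor the paper leaves implicit.
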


\section{Applications}

In this section, we describe two applications of Theorem \ref{extension}.

\subsection*{Bounded Degree Graphs}
Let $D,n \in \mathbb{N}$ with $D \leq n$.  The authors in~\cite{KNRS13}  discuss the following model. Say that an analyst observes a network and wants to build a private algorithm for estimating a graph quantity of the network (e.g. the number of triangles, the degree histogram or the number of edges). Differential privacy here is defined with respect to the node or rewiring distance; the exact definition of this distance is not in the scope of the note but we encourage the interested reader to the discussion in Section 2.1. of ~\cite{KNRS13}, or Section 1 in  ~\cite{BCSZ18}. Now motivated by the apparent sparsity of various real life networks, the authors in~\cite{KNRS13} face the following question; consider an algorithm which is $\epsilon$-differentially private over the space of undirected graphs on $n$ vertices with maximum degree at most $D$, which contains all ``realistic" graphs for relatively small values of $D$.  Can we extend this algorithm to a differentially private algorithm on the whole space of undirected graphs on $n$ vertices? 

Using an efficient approach which is tailored for bounded degree graphs, they establish the existence of an extension algorithm which is $(2\epsilon,\delta)$-differentially private for some $\delta \geq 0$ (Lemma 6.2 in~\cite{KNRS13}).

Notice, though, that the question falls exactly into the setting of our note. In particular, for $\mathcal{M}$ the space of undirected graphs on $n$ vertices endowed with the node distance and $\mathcal{H}$ the set of graphs with bounded degree $D$, Theorem \ref{extension} implies the existence of an $2\epsilon=(2\epsilon,0)$-differentially private extension. Therefore, we conclude the existential aspect of Lemma 6.2. in~\cite{KNRS13} as a special case of our result, where we improve the $\delta$ to be equal to 0. It is important, though, to notice that unlike the results in~\cite{KNRS13}, our result does not have any efficiency guarantee.

\subsection*{Estimating Random Graphs} 

Theorem \ref{extension} firstly appeared in~\cite{BCSZ18}, where the authors discuss the following fundamental and to the best of our knowledge unexplored question: suppose we receive a sample from a $G(n,p)$ Erdos Renyi random graph model, where $n$ is known and $p$ is unknown. How well can we estimate $p$, in the mean squared error sense, using an $\epsilon$-differentially private algorithm? Note that differential privacy is again understood here with the respect to the node distance. The standard approach for such a question is to add appropriate Laplace noise to the edge density of the observed graph. In~\cite{BCSZ18} it is established that all such estimators can imply at best a rate of the order $$\frac{1}{n^2}+\frac{1}{n^2 \epsilon^2}.$$However, it turns out that this rate is suboptimal. The main reason is that for any level of Laplace noise, the estimator can not take into account the ``typical" homogeneous structure of an Erdos Renyi graph. In our paper~\cite{BCSZ18} we take advantage of this property and construct a set $\mathcal{H}$ which captures a typical homogeneous structure of an Erdos Renyi graph. We first build an algorithm on graphs belonging in $\mathcal{H}$, apply Theorem \ref{extension} to extend the algorithm and finally obtain a rate$$\frac{1}{n^2}+\frac{\log n}{n^3 \epsilon^2}.$$The authors in~\cite{BCSZ18} provide also a tight lower bound result (Theorem 4.5. in ~\cite{BCSZ18}) in the similar case where the graph is sampled from the uniform model $G(n,m)$ and $\epsilon$ is constant with respect to $n$. The lower bound result proves that the rate obtained using Theorem \ref{extension} is optimal for the $G(n,m)$ case and suggests the same for $G(n,p)$. 

%
%

\section{Proof of Theorem \ref{extension}}\label{ext}
  We start with a lemma.
\begin{lemma}\label{lem}
Let $\mu$ be a probability measure on $\Omega$ and $\mathcal{A}'$ be a randomized algorithm designed for input from $\mathcal{H}' \subseteq \mathcal{M}$. Suppose that for any $D \in \mathcal{H}'$, $\mathcal{A}'(D)$ is absolutely continuous to $\mu$ and let $f_D$ the Radon-Nikodym derivative $\frac{d\mathcal{A}'(D)}{d\mu}$. Then the following are equivalent \begin{itemize} \item[(1)] $\mathcal{A}'$ is $\epsilon$-differentially private; \item[(2)] For any $D,D' \in \mathcal{H}$ \begin{equation}\label{prime}  f_{\mathcal{A}'(D)} \leq \exp \left(\epsilon d(D,D') \right) f_{\mathcal{A}'(D')}, \end{equation}$\mu$-almost surely. 
\end{itemize}
\end{lemma}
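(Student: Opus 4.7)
The plan is to prove the two directions of the equivalence separately; direction $(2) \Rightarrow (1)$ is routine, while direction $(1) \Rightarrow (2)$ requires a standard ``contradiction set'' argument.

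For $(2) \Rightarrow (1)$, I would fix $D, D' \in \mathcal{H}'$ and an arbitrary measurable $S \in \mathcal{F}$. Writing probabilities as integrals against the dominating measure $\mu$ and applying the pointwise (a.s.) bound on densities:
\begin{equation*}
\mathbb{P}\!\left(\mathcal{A}'(D)\in S\right) = \int_S f_D \, d\mu \leq \int_S \exp\!\left(\epsilon d(D,D')\right) f_{D'} \, d\mu = \exp\!\left(\epsilon d(D,D')\right)\mathbb{P}\!\left(\mathcal{A}'(D')\in S\right),
\end{equation*}
which is exactly the $\epsilon$-differential privacy condition.

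For the harder direction $(1) \Rightarrow (2)$, fix $D, D' \in \mathcal{H}'$ and define the ``bad set''
\begin{equation*}
S := \left\{ \omega \in \Omega \,:\, f_D(\omega) > \exp\!\left(\epsilon d(D,D')\right) f_{D'}(\omega) \right\}.
\end{equation*}
This set is in $\mathcal{F}$ since $f_D$ and $f_{D'}$ are measurable. My goal is to show $\mu(S)=0$. Applying $\epsilon$-differential privacy to $S$ gives
\begin{equation*}
\int_S f_D \, d\mu \leq \exp\!\left(\epsilon d(D,D')\right) \int_S f_{D'} \, d\mu,
\end{equation*}
i.e.\ $\int_S \bigl(f_D - \exp(\epsilon d(D,D')) f_{D'}\bigr)\, d\mu \leq 0$. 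On the other hand, by the very definition of $S$, the integrand is strictly positive on $S$. Hence the integral vanishes only if $\mu(S) = 0$, which is what we wanted.

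The only delicate point is making sure the set $S$ is measurable and that the strict-positivity-forces-zero-measure step is rigorous; both are standard once the densities are interpreted as (fixed representatives of) measurable Radon--Nikodym derivatives. I do not foresee a real obstacle here — the content of the lemma is essentially the tautology that an integral inequality that must hold for \emph{every} measurable $S$ forces a pointwise inequality $\mu$-almost surely, and the bad-set trick is the standard way to formalize this.
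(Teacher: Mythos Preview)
Your proposal is correct and follows essentially the same argument as the paper: both directions are handled identically, with $(2)\Rightarrow(1)$ by integrating the pointwise bound and $(1)\Rightarrow(2)$ via the ``bad set'' $S=\{f_D>\exp(\epsilon d(D,D'))f_{D'}\}$ on which a strictly positive integrand must integrate to at most zero. The paper phrases the second direction as an explicit contradiction (assume $\mu(S)>0$), but the logic is the same.
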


\begin{proof}
For the one direction, suppose $\mathcal{A}'$ satisfies (\ref{prime}). Then for any set $S \in \mathcal{F}$ we obtain
 \begin{align*} \mathbb{P}\left(\mathcal{A}'(D) \in S \right) &=\int_{S}  f_{\mathcal{A}'(D)} d\mu \\
& \leq \exp \left(\epsilon d(D,D') \right) \int_{ S} f_{\mathcal{A}'(D')} d\mu\\
&= \exp \left(\epsilon d(D,D') \right)\mathbb{P}\left(\mathcal{A}'(D) \in S \right).
 \end{align*}We prove the other direction by contradiction. Consider the set $$S= \{ f_{\mathcal{A}'(D)} > \exp \left(\epsilon d(D,D') \right) f_{\mathcal{A}'(D')} \} \in \mathcal{F}$$ and assume that $\mu (S)>0$. By definition on being strictly positive on a set of positive measure \begin{equation*}\int_S \left[f_{\mathcal{A'}(D)} -\exp \left(\epsilon d(D,D') \right) f_{\mathcal{A}'(D')}\right]  d \mu>0 \end{equation*}or equivalently\begin{equation}\label{contrad}\int_S f_{\mathcal{A'}(D)}d \mu > \exp \left(\epsilon d(D,D') \right) \int_Sf_{\mathcal{A}'(D')}  d \mu. \end{equation} On the other hand using $\epsilon$-differential privacy we obtain
\begin{align*}  \int_S f_{\mathcal{A'}(D)}d \mu & =\mathbb{P}(\mathcal{A'}(D) \in S) \\\
&\leq \exp \left(\epsilon d(D,D') \right)\mathbb{P}(\mathcal{A'}(D') \in S) \\
&= \exp \left(\epsilon d(D,D') \right) \int_Sf_{\mathcal{A}'(D')}  d \mu,
\end{align*} a contradiction with (\ref{contrad}). This completes the proof of the Lemma.
\end{proof}
Now we establish Theorem \ref{extension}.

\begin{proof}

Since $\mathcal{H} \not  = \emptyset$, let $D_0 \in \mathcal{H}$ and denote by $\mu$ the measure $\hat{\mathcal{A}}(D_0)$. From the definition of differential privacy we know for all $D \in \mathcal{H}$ and $S \in \mathcal{F}$, if $\mathbb{P}\left( \hat{\mathcal{A}}(D_0) \in S\right) =0$ then $\mathbb{P}\left( \hat{\mathcal{A}}(D) \in S\right) =0$. In the language of measure theory that means the measure $\hat{\mathcal{A}}(D)$ is absolutely continuous to $\mathcal{A}(D_0)$. By Radon-Nikodym theorem we conclude that there are measurable functions $f_D: \Omega \rightarrow [0,+\infty)$ such that for all $S \in \mathcal{F}$, \begin{equation}\mathbb{P}\left( \hat{\mathcal{A}}(D) \in S\right) = \int_S f_D d\mu. \end{equation}

 We define now the following randomized algorithm $\mathcal{A}$. For every $D \in \mathcal{M}$, $\mathcal{A}(D)$ samples from $\Omega$ according to the absolutely continuous to $\mu$ distribution with density proportional to $$\inf_{D' \in \mathcal{H}} \left[ \exp\left(\epsilon d(D,D')\right) f_{\hat{\mathcal{A}}(D')} \right].$$ That is for every $\omega \in \Omega$ its density with respect to $\mu$  is defined as \begin{equation*}f_{\mathcal{A}(D)}(\omega) =\frac{1}{Z_D} \inf_{D' \in \mathcal{H}} \left[ \exp\left(\epsilon d(D,D')\right) f_{\hat{\mathcal{A}}(D')}(\omega) \right],\end{equation*} where \begin{equation*}Z_D:=\int_{\Omega} \inf_{D' \in \mathcal{H}} \left[ \left(\epsilon d(D,D')\right) f_{\hat{\mathcal{A}}(D)'} \right] d \mu.\end{equation*}In particular for all $S \in \mathcal{F}$ it holds $$\mathbb{P}(\mathcal{A}(D) \in S)=\int_S f_{\mathcal{A}(D)}d \mu.$$

We first prove that $\mathcal{A}$ is $2\epsilon$-differentially private over all pairs of input from $\mathcal{M}$. Using Lemma \ref{lem} it suffices to prove that for any $D_1,D_2 \in \mathcal{H}$,
 \begin{align*}
f_{\mathcal{A}(D_1)} \leq \exp \left(2 \epsilon d(D_1,D_2)\right) f_{\mathcal{A}(D_2)},
\end{align*} $\mu$-almost surely. We establish it in particular for every $\omega \in \Omega$. Let $D_1,D_2 \in \mathcal{M}$. Using triangle inequality we obtain for every $\omega \in \Omega$, \begin{align*}\inf_{D' \in \mathcal{H}} \left[ \exp \left( \epsilon d(D_1,D')\right) f_{\hat{\mathcal{A}}(D')}(\omega) \right] & \leq \inf_{D' \in \mathcal{H}} \left[ \exp\left(\epsilon \left[d(D_1,D_2)+d(D_2,D')\right] \right) f_{\hat{\mathcal{A}}(D')}(\omega) \right]\\
&=\exp \left( \epsilon d(D_1,D_2)\right) \inf_{D' \in \mathcal{H}} \left[ \exp \left(\epsilon d(D,D')\right) f_{\hat{\mathcal{A}}(D')}(\omega) \right],
\end{align*}which implies that for any $D_1,D_2 \in \mathcal{M}$, 
\begin{align*}
Z_{D_1}&=\int_{\Omega} \inf_{D' \in \mathcal{H}} \left[ \exp\left(\epsilon d(D_1,D')\right) f_{\hat{\mathcal{A}}(D')} \right] d \mu \\
&\leq \exp\left(\epsilon d(D_1,D_2) \right)\int_{\Omega} \inf_{D' \in \mathcal{H}} \left[ \exp \left( \epsilon d(D_2,D') \right) f_{\hat{\mathcal{A}}(D')}(\omega) \right] d\mu\\
&=\exp \left( \epsilon d(D_1,D_2) \right) Z_{D_2}. 
\end{align*}Therefore using the above two inequalities we obtain that for any $D_1,D_2 \in \mathcal{H}$ and $\omega \in \Omega$,
 \begin{align*}
f_{\mathcal{A}(D_1)}(\omega) &=\frac{1}{Z_{D_1}} \inf_{D' \in \mathcal{H}} \left[ \exp \left( \epsilon d(D_1,D') \right) f_{\hat{\mathcal{A}}(D')}(\omega) \right] \\
&\leq \frac{1}{\exp\left(-\epsilon d(D_2,D_1)\right)Z_{D_2}}\exp\left(\epsilon d(D_1,D_2)\right) \inf_{D' \in \mathcal{H}}  \left[ \exp \left( \epsilon d(D_2,D') \right) f_{\hat{\mathcal{A}}(D')}(\omega) \right] \\
&=\exp \left(2 \epsilon d(D_1,D_2)\right) \frac{1}{Z_{D_2}} \inf_{D' \in \mathcal{H}}  \left[ \exp \left( \epsilon d(D_2,D') \right) f_{\hat{\mathcal{A}}(D')}(\omega) \right]\\
&=\exp \left(2 \epsilon d(D_1,D_2)\right) f_{\mathcal{A}(D_2)}(\omega),
\end{align*}as we wanted. 

Now we prove that for every $D \in \mathcal{H}$, $\mathcal{A}(D) \overset{d}{=}  \hat{\mathcal{A}}(D)$. Consider an arbitrary $D \in \mathcal{H}$. From Lemma \ref{lem} we obtain that $\hat{\mathcal{A}}$ is $\epsilon$-differentially private which implies that for any $D,D' \in \mathcal{H}$ \begin{equation}  f_{\hat{\mathcal{A}}(D)} \leq \exp \left(\epsilon d(D,D') \right) f_{\hat{\mathcal{A}}(D')},   \end{equation} $\mu$-almost surely. Observing that the above inequality holds as $\mu$-almost sure equality if $D'=D$ we obtain that for any $D \in \mathcal{H}$ it holds $$f_{\hat{\mathcal{A}}(D)}(x)=\inf_{D' \in \mathcal{H}}  \left[ \exp \left( \epsilon d(D,D') \right) f_{\hat{\mathcal{A}}(D')}(x) \right],$$ $\mu$-almost surely. Using that $f_{\hat{\mathcal{A}}(D)}$ is the Radon-Nikodym derivative $\frac{d \hat{\mathcal{A}}(D)}{d \mu}$  we conclude $$Z_{D}:=\int_{\Omega} f_{\hat{\mathcal{A}}(D)} d\mu=\mu(\Omega)=1.$$ Therefore $$f_{\hat{\mathcal{A}}(D)}=\frac{1}{Z_D}\inf_{D' \in \mathcal{H}}  \left[ \exp \left( \epsilon d(D,D') \right) f_{\hat{\mathcal{A}}(D')} \right],$$ $\mu$-almost surely and hence $$ f_{\hat{\mathcal{A}}(D)}=f_{\mathcal{A}(D)},$$ $\mu$-almost surely. This suffices to conclude that $\hat{\mathcal{A}}(D) \overset{d}{=}  \mathcal{A}(D)$ as needed. 

The proof of Theorem \ref{extension} is complete.
\end{proof}

\section{An Open Problem: Efficiency}Theorem \ref{extension} answers affirmatively the question of extendability of an arbitrary $\epsilon$-differential private algorithm.  An important and interesting open problem for future work is the underlying computational question; under which conditions such an extension can become computationally efficient? This was the focus of much of the existing work~\cite{KNRS13,BBDS13,RaskhodnikovaS16,DayLL16,CummingsD18}; our general result suggests that much greater generality is possible for polynomial-time extensions.

\bibliographystyle{plain}
\bibliography{forArxiv,Lipschitz-extensions}

\end{document}